\documentclass[reqno,12pt]{amsart}

\usepackage{amsmath, epsfig, cite}
\usepackage{amssymb}
\usepackage{amsfonts}
\usepackage{latexsym}
\usepackage{amsthm}
\usepackage [latin1]{inputenc}

\newtheorem{theorem}{Theorem}

\newtheorem{lemma}{Lemma}

\theoremstyle{remark}

\setlength{\textwidth}{160.0mm} \setlength{\oddsidemargin}{0mm}
\setlength{\evensidemargin}{0mm} \addtolength{\topmargin}{-.1cm}
\addtolength{\textheight}{.2cm}

\numberwithin{equation}{section}

\allowdisplaybreaks

\author{Guoping Gu}
\address{DEPARTMENT OF MATHEMATICS, SHANGHAI UNIVERSITY, SHANGHAI 200444, P. R. CHINA}
\email{$^*$ Corresponding author. guguoping0@163.com (G. Gu), xiaoxiawang@shu.edu.cn (X. Wang).}

\author{Xiaoxia Wang$^*$}
\title{some double series for $\pi$ and their $q$-analogues}
\subjclass[2010]{Primary 33D15; Secondary 11A07, 11B65}

\keywords{basic hypergeometric series; q-analogues; double series for $\pi$; partial derivative operator.}

\begin{document}
	
	\begin{abstract}
		In this paper, by applying the partial derivative operator on summation formulas of hypergeometric series and basic hypergeometric series, we establish several double series for $\pi$ and their q-analogues.
	\end{abstract}
	
	\maketitle
	

	\section{Introduction}
	
	In 1914, Ramanujan \cite{Ramanujan} laid out 17 series for $1/\pi$ without proof, and then all of them were proved by Borweins \cite{Borweins}. One of Ramanujan's formulas is stated as
	\begin{equation*}
		\sum_{k=0}^{\infty}(6k+1)\frac{(\frac12)_k^3}{k!^34^k}=\frac{4}{\pi},
	\end{equation*}
	where the shifted factorial $(x)_n$ is defined as follows: for a complex number $x$,
	\begin{equation*}
		(x)_n=\begin{cases}
			x(x+1)(x+2)\cdots(x+n-1)& \text{if} \quad n\in\mathbb{Z}^+;\\
			1&\text{if}\quad n=0.
		\end{cases}
	\end{equation*}
	In addition, $(x)_n$ can also be expressed by
	\begin{equation*}
		(x)_n=\frac{\Gamma(x+n)}{\Gamma(x)}.
	\end{equation*}
	Here $\Gamma(x)$ stands for the well-known Gamma function which is defined as
	\begin{equation*}
		\Gamma(x)=\int_{0}^{\infty}t^{x-1}e^{-t}dt\qquad\text{with\quad $Re(x)>0$}.
	\end{equation*}
	The Gamma function has many important properties, for example:
	\begin{equation*}
		\Gamma(x+1)=x\Gamma(x),\quad\Gamma(x)\Gamma(1-x)=\frac{\pi}{\sin(\pi x)},\quad\lim_{n\rightarrow\infty}\frac{\Gamma(x+n)}{\Gamma(y+n)}n^{y-x}=1,
	\end{equation*}
	which will be used without explanation in the following of this paper.
	
	Recently, Wei \cite{Wei-1} certified the following two double series for $\pi$ which were conjectured by Guo and Lian \cite{Lian}:
	\begin{align}
		\sum_{k=1}^{\infty}(6k+1)\frac{(\frac12)_k^3}{k!^34^k}\sum_{j=1}^{k}\left(\frac{1}{(2j-1)^2}-\frac{1}{16j^2}\right)&=\frac{\pi}{12},&\nonumber\\
		\sum_{k=1}^{\infty}(-1)^k(6k+1)\frac{(\frac12)_k^3}{k!^38^k}\sum_{j=1}^{k}\left(\frac{1}{(2j-1)^2}-\frac{1}{16j^2}\right)&=-\frac{\sqrt{2}\pi}{48}\label{eq:1-1}.
	\end{align}
	Besides, Swisher \cite{Swisher} gave a congruence on truncated form of  \eqref{eq:1-1}, which was conjectured by Long \cite{Long} in 2011.
	Furthermore, Wei also similarly gives another three double series for $\pi$ in \cite{Wei-2} and one of them is
	\begin{equation}\label{eq:1-2}
		\sum_{k=1}^{\infty}(-1)^k(4k+1)\frac{(\frac12)_k^3}{k!^3}\sum_{i=1}^{2k}\frac{(-1)^i}{i^2}=\frac{\pi}{12},
	\end{equation}
	with the $q$-analogue as
	\begin{equation}
		\sum_{k=1}^{\infty}(-1)^kq^{k^2}[4k+1]\frac{(q;q^2)_k^3}{(q^2;q^2)_k^3}\sum_{i=1}^{2k}(-1)^i\frac{q^i}{[i]^2}=\frac{(q,q^3;q^2)_{\infty}}{(q^2;q^2)^2_{\infty}}\sum_{j=1}^{\infty}\frac{q^{2j}}{[2j]^2} \label{eq:1-3}.
	\end{equation}
	Here the $q$-integer $[n]$ is defined as $[n]=(1-q^n)/(1-q)=1+q+\cdots+q^{n-1}$ and the $q$-shifted factorial is stated as
	\begin{equation*}
		(x;q)_{\infty}=\prod_{i=0}^{\infty}(1-xq^i),~(x;q)_n=\frac{(x;q)_{\infty}}{(xq^n;q)_{\infty}}.
	\end{equation*}
	For convenience, we shall also adopt the following notation
	\begin{equation*}
		(x_1,x_2,\cdots,x_r;q)_m=(x_1;q)_m(x_2;q)_m\cdots(x_r;q)_m,  \quad m\in\mathbb{Z}^+\cup\{0,\infty\}.
	\end{equation*}
	
	Inspired by the work just mentioned, we deduce some double series for $\pi$ and their $q$-analogues by applying the partial derivative operator.
	For more known series on $\pi$, we refer the reader to the papers \cite{Chan-1,Chan-2,Liu,Sun,Zudilin}.
	
	For the completeness of this paper, we will introduce the following concept. For a multivariable function $f(x_1,x_2,\cdots,x_m)$, define the partial derivative operator $\mathcal{D}_{x_i}$ by
	\begin{equation*}
		\mathcal{D}_{x_i}f(x_1,x_2,\cdots,x_m)=\frac{d}{dx_i}f(x_1,x_2,\cdots,x_m) \qquad \text{with $1\leq i\leq m$}.
	\end{equation*}

	\section{Main results}
	Applying the partial derivative operator on the summation formula of hypergeometric series, we get the following double series for $\pi$.
	\begin{theorem}\label{thm:1}
		The following result is true.
		\begin{equation}\label{eq:2-1}
			\sum_{k=1}^{\infty}(6k-1)\frac{(-\frac12)^2_k}{4^kk!(\frac32)_k}\sum_{i=1}^{k}\frac{1}{(2i-1)^2}=\frac{\pi^3}{144}.
		\end{equation}
	\end{theorem}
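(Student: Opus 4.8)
The plan is to realize the summand of \eqref{eq:2-1} as a specialization of a derivative of a parametrised (basic) hypergeometric summation formula. First I would put the left–hand side into a more transparent shape. Using $(\tfrac32)_k=(2k+1)(\tfrac12)_k$ and $(-\tfrac12)_k=-(\tfrac12)_k/(2k-1)$ one gets
\[
\frac{(-\tfrac12)_k^2}{4^kk!\,(\tfrac32)_k}=\frac{(\tfrac12)_k}{4^kk!\,(2k-1)^2(2k+1)},
\]
so that, after the partial–fraction split $\dfrac{6k-1}{(2k-1)^2(2k+1)}=\dfrac1{2k-1}+\dfrac1{(2k-1)^2}-\dfrac1{2k+1}$, the series \eqref{eq:2-1} becomes a combination of sums of the form $\sum_k\frac{(\tfrac12)_k}{4^kk!}\,r(k)\,\sum_{i=1}^k(2i-1)^{-2}$ with $r$ a simple rational function. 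This is precisely the shape one obtains by differentiating, with respect to a parameter, a classical ${}_3F_2$- or (very–)well–poised ${}_5F_4$-evaluation whose summand carries a factor $(a)_k^2/(\cdot)_k$ near $a=-\tfrac12$, with $\tfrac32=1-a$ occurring as the lower parameter; the core series is the convergent ${}_2F_1(-\tfrac12,-\tfrac12;\tfrac32;\tfrac14)$ dressed with the linear weight $6k-1$.

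The heart of the argument is then to apply $\mathcal D_a$ twice (equivalently, to read off the $\varepsilon^2$–coefficient after the substitution $a\mapsto-\tfrac12+\varepsilon$) in a suitable closed–form identity. I would use
\[
\mathcal D_a(a)_k=(a)_k\bigl(\psi(a+k)-\psi(a)\bigr),\qquad
\mathcal D_a\bigl(\psi(a+k)-\psi(a)\bigr)=\psi'(a+k)-\psi'(a),
\]
together with the fact that at $a=\pm\tfrac12$ these $\psi$- and $\psi'$-differences telescope into the finite sums $\sum_i(2i-1)^{-1}$ and $\sum_i(2i-1)^{-2}$; the polynomial weight $6k-1$ should come out either from a very-well-poised/contiguous structure built into the base identity or by combining the base series with its $k$-shift. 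On the right–hand side one differentiates the corresponding quotient of Gamma functions, which produces $\psi$ and $\psi'$ evaluated at half–integers; here one invokes $\Gamma(\tfrac12)=\sqrt{\pi}$, $\psi(\tfrac12)=-\gamma-2\ln 2$ and, crucially, $\psi'(\tfrac12)=\pi^2/2$, so that the $\gamma$- and $\ln 2$-contributions cancel and a single clean term $\pi^3/144$ survives. Term–by–term differentiation is legitimate by the local uniform convergence of the series and of its differentiated versions for $a$ near $-\tfrac12$, and the outer sum may be started at $k=1$ since the $k=0$ term carries an empty inner sum.

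I expect the main obstacle to be twofold. First, pinning down the precise base identity: one needs a summation formula whose parameter specialization reproduces not only $(-\tfrac12)_k^2/(4^kk!(\tfrac32)_k)$ but also the factor $6k-1$, and which is \emph{well poised enough} that the second $a$-derivative leaves no spurious contributions. This is delicate because $\mathcal D_a^2$ of a product of Pochhammer symbols inevitably generates, besides the desired $\psi'$-piece $\propto\sum_i(2i-1)^{-2}$, a squared single-sum piece $\propto\bigl(\sum_i(2i-1)^{-1}\bigr)^2$ and cross terms; one must arrange that, after summation over $k$, these collapse (by telescoping or by reducing to elementary quantities) and are absorbed into the constant. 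Second, the bookkeeping on the Gamma side: one has to track several values of $\psi$ and $\psi'$ at half–integers and verify that everything except $\pi^3/144$ cancels — a numerical spot check (the partial sums of \eqref{eq:2-1} approach $0.21532\ldots=\pi^3/144$) is a convenient safeguard while carrying this out.
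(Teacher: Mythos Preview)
Your overall strategy --- differentiate a closed-form hypergeometric summation twice in a parameter and specialize --- is exactly the paper's strategy, but two concrete ingredients are missing, and without them the argument does not close.

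\textbf{The base identity.} None of the candidates you list (a ${}_2F_1$, a ${}_3F_2$, or a very-well-poised ${}_5F_4$) will produce the linear weight $6k-1$. In a standard very-well-poised series the extra factor is $(1+\tfrac{a}{2})_k/(\tfrac{a}{2})_k=(a+2k)/a$, which at $a=-\tfrac12$ gives a $4k-1$-type weight, not $6k-1$. The $6k-1$ is the fingerprint of the \emph{cubic} very-well-poised factor $(1+\tfrac{a}{3})_k/(\tfrac{a}{3})_k=(a+3k)/a$, and the relevant closed form is the Gessel--Stanton ${}_7F_6$ evaluation
\[
{}_7F_6\!\left[\begin{array}{c}
a,\ 1+\tfrac{a}{3},\ b,\ 1-b,\ c,\ \tfrac12+a-c+n,\ -n\\[2pt]
\tfrac{a}{3},\ \tfrac{2+a-b}{2},\ \tfrac{1+a+b}{2},\ 1+a+2n,\ 1+a-2c,\ 2c-a-2n
\end{array};1\right]
=\left[\begin{array}{c}
\tfrac{1+a}{2},\ 1+\tfrac a2,\ \tfrac{1+a+b}{2}-c,\ 1+\tfrac{a-b}{2}-c\\[2pt]
\tfrac{1+a+b}{2},\ 1+\tfrac{a-b}{2},\ \tfrac{1+a}{2}-c,\ 1+\tfrac a2-c
\end{array}\right]_n,
\]
specialized at $a=-\tfrac12$, $b=\tfrac12$, $c\to-\tfrac12$, $n\to\infty$. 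Your partial-fraction rewriting of $\dfrac{6k-1}{(2k-1)^2(2k+1)}$ is correct algebra but does not lead to a summable identity; it dismantles precisely the structure the ${}_7F_6$ supplies.

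\textbf{Killing the cross terms.} You correctly flag that $\mathcal D^2$ of a product of Pochhammer symbols produces, besides the wanted $\sum_i(2i-1)^{-2}$, a squared first-derivative piece $\bigl(\sum_i(2i-1)^{-1}\bigr)^2$. You hope these ``collapse by telescoping''; in general they do not. The paper's resolution is structural: differentiate with respect to $b$, not $a$. The Gessel--Stanton summand depends on $b$ only through $(b)_k(1-b)_k$ upstairs and $(\tfrac{2+a-b}{2})_k(\tfrac{1+a+b}{2})_k$ downstairs, each of which is invariant under $b\mapsto 1-b$. Hence the first $b$-derivative vanishes identically at $b=\tfrac12$ (both on the series side and on the product side), so after the second differentiation only the pure $H_k^{(2)}$-type terms survive and no cross terms ever appear. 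Differentiating in $a$, as you propose, enjoys no such symmetry, and the spurious $(\sum_i(2i-1)^{-1})^2$ contributions have no reason to cancel.

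With these two choices in place the remaining work is routine: the right-hand side, after $n\to\infty$, reduces via $\lim_{n\to\infty}\Gamma(x+n)/\Gamma(y+n)\,n^{y-x}=1$ and $\sum_{i\ge1}i^{-2}=\pi^2/6$ to $\pi^3/144$; your $\psi'(\tfrac12)=\pi^2/2$ is the same information in different packaging. But as written, the proposal lacks both the correct source identity and the symmetry mechanism, and so does not yet constitute a proof.
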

	With the help of Gasper and Rahman's quadratic summation formula \cite[Equation~(3.8.12)]{G-R}, we get the $q$-analogue of \eqref{eq:2-1}.
	\begin{theorem}\label{thm:2}
		Let the complex number $q$ with $0<|q|<1$. Then, we have
		\begin{align}
			\sum_{k=1}^{\infty}&[6k-1]\frac{(q^{-1},q,q;q^2)_k(q^{-2};q^4)_k}{(q^4,q^2,q^2;q^4)_k(q^{3};q^2)_k}q^{(k+1)^2}\sum_{i=1}^{k}\left(\frac{q^{2i-1}}{[2i-1]^2}-\frac{q^{4i-2}}{[4i-2]^2}\right)&\nonumber\\
			=&\frac{(q,q^4,q^4;q^4)_{\infty}}{(q^5,q^2,q^2;q^4)_{\infty}}\sum_{i=1}^{\infty}\left(\frac{q^{4i-2}}{[4i-2]^2}-\frac{q^{4i}}{[4i]^2}\right).
		\end{align}
	\end{theorem}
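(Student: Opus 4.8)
The plan is to obtain Theorem~\ref{thm:2} from Gasper and Rahman's quadratic summation formula \cite[Eq.~(3.8.12)]{G-R} by a $q$-version of the argument behind Theorem~\ref{thm:1}: specialise the free parameters of (3.8.12) so as to produce a one-parameter family of identities that is \emph{symmetric} under the perturbation, and then extract the second-order term. Concretely, I would choose the specialisation so that (3.8.12) becomes
$$\sum_{k=0}^{\infty}\Phi_k(\epsilon)=\Psi(\epsilon)\qquad(\text{for }\epsilon\text{ near }0),$$
where at $\epsilon=0$ it collapses to the ``base'' $q$-series
$$\sum_{k=0}^{\infty}[6k-1]\,\frac{(q^{-1},q,q;q^2)_k(q^{-2};q^4)_k}{(q^4,q^2,q^2;q^4)_k(q^3;q^2)_k}\,q^{(k+1)^2}=\frac{(q,q^4,q^4;q^4)_\infty}{(q^5,q^2,q^2;q^4)_\infty},$$
and where the perturbation enters through one parameter of (3.8.12) placed at the value fixed by the reflection symmetry of the quadratic series, so that a substitution like $b\mapsto q^{1+\epsilon}$ carries the whole summand into an \emph{even} function of $\epsilon$. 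Because the occurrences of that parameter run both through $q^2$-modulus factors such as $(q^{1\pm\epsilon};q^2)_k$ and through $q^4$-modulus factors such as $(q^{2\pm\epsilon};q^4)_k$ (the square of the parameter feeding the latter), both scales are present in $\Phi_k(\epsilon)$ and likewise in $\Psi(\epsilon)$.

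I would then apply the partial derivative operator twice and put $\epsilon=0$. By the evenness all first-order contributions vanish, so no cross terms survive and the computation is controlled by the elementary fact that
$$\mathcal{D}_{\epsilon}^{2}\,\log\bigl(q^{a+\epsilon};q^{m}\bigr)_k\Big|_{\epsilon=0}=-(\log q)^{2}\sum_{i=1}^{k}\frac{q^{\,mi+a-m}}{(1-q^{\,mi+a-m})^{2}}=-\frac{(\log q)^{2}}{(1-q)^{2}}\sum_{i=1}^{k}\frac{q^{\,mi+a-m}}{[\,mi+a-m\,]^{2}},$$
and its analogue for the infinite products on the right. The $q^2$-modulus occurrences then deliver the terms $q^{2i-1}/[2i-1]^2$ and the $q^4$-modulus occurrences the terms $q^{4i-2}/[4i-2]^2$, with the relative minus sign reflecting their opposite placements in the fraction defining $\Phi_k$; thus the left side of (3.8.12), differentiated, becomes $-\tfrac{2(\log q)^{2}}{(1-q)^{2}}$ times $\sum_{k\ge1}[6k-1]\frac{(q^{-1},q,q;q^2)_k(q^{-2};q^4)_k}{(q^4,q^2,q^2;q^4)_k(q^3;q^2)_k}q^{(k+1)^2}\sum_{i=1}^{k}\bigl(q^{2i-1}/[2i-1]^2-q^{4i-2}/[4i-2]^2\bigr)$, while the right side becomes $-\tfrac{2(\log q)^{2}}{(1-q)^{2}}$ times $\frac{(q,q^4,q^4;q^4)_\infty}{(q^5,q^2,q^2;q^4)_\infty}\sum_{i\ge1}\bigl(q^{4i-2}/[4i-2]^2-q^{4i}/[4i]^2\bigr)$. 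Cancelling the common factor $-\tfrac{2(\log q)^2}{(1-q)^2}$ yields Theorem~\ref{thm:2}. Differentiation under the summation sign is legitimate since $q^{(k+1)^2}$ makes the series and all its $\epsilon$-derivatives locally uniformly convergent for $0<|q|<1$, and the $k=0$ term, being $\epsilon$-free, contributes nothing --- which is why the sum begins at $k=1$.

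The step I expect to be the genuine obstacle is the first one: pinning down the precise specialisation of the parameters of (3.8.12) --- and verifying that the companion power of $q$ comes out exactly as $q^{(k+1)^2}$ and the linear factor exactly as $[6k-1]$ --- so that the $\epsilon=0$ identity is the base $q$-series above \emph{and} the perturbed family is genuinely even in $\epsilon$. In particular, every occurrence of the perturbed parameter must be tracked, including the lone factors $(q^{-1};q^2)_k$, $(q^{-2};q^4)_k$, $(q^3;q^2)_k$, $(q^4;q^4)_k$ and their infinite-product analogues (which are perturbed as well), and one must check that after the second differentiation all these contributions recombine into precisely the two advertised inner sums; the reflection symmetry of the quadratic formula is what forces this and is also the cleanest way to organise the check. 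The scalar prefactor of (3.8.12), itself a quotient of infinite $q$-Pochhammer symbols in the parameters, has to be handled the same way, and writing each $\epsilon$-dependent factor as the exponential of an even power series in $\epsilon$ reduces the whole verification to a mechanical, if lengthy, computation.
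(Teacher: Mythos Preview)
Your overall strategy---specialise Gasper--Rahman (3.8.12), perturb one parameter so the identity is even in $\epsilon$, differentiate twice, and read off the second-order term---matches the paper's exactly. The evenness you invoke is precisely the built-in symmetry $b\leftrightarrow q/b$ of (3.8.12), and the paper's observation that $A_k=B_n=0$ at the special point is just the vanishing of the first derivative that you deduce from parity.

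There is, however, a genuine gap in your execution. Formula (3.8.12) is not of the form $\sum_k\Phi_k=\Psi$: its left side is the main sum \emph{plus} an infinite-product multiple of a ${}_3\phi_2$. You never say what happens to that second piece. More seriously, the factor $q^{(k+1)^2}$ you rely on for uniform convergence is not present in (3.8.12) at all---the summand there carries only $q^k$. The quadratic exponent only appears \emph{after} a limiting process in which certain $q^{\pm 2n}$-type parameters are sent to $0$ or $\infty$; it is not available to justify differentiation under the sum in the raw formula. So your argument for interchanging $\mathcal{D}_\epsilon^2$ with $\sum_k$ is circular, and the ``base'' identity you write down is not an immediate specialisation of (3.8.12) either.

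The paper resolves both issues at a stroke: before differentiating it sets $d\to q^{-2n}$ (and $f\to c^2$), which makes $(d;q^2)_\infty=0$ and thereby kills the ${}_3\phi_2$ term outright, while simultaneously terminating the main sum at $k=n$. Differentiation is then trivially legitimate on a finite sum. Only after applying $\mathcal{D}_b$ twice and specialising $(a,b,c)=(q^{-1/2},q^{1/2},q^{-1/2})$ does the paper let $n\to\infty$ and replace $q\mapsto q^2$; the $q^{(k+1)^2}$ emerges from the limits $\frac{(q^{-2n};q^2)_k(a^2q^{2n+1}/c^2;q^2)_k}{(aq^{2n+1};q)_k(c^2q^{-2n}/a;q)_k}\,q^k$. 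This terminate-then-limit device is the missing ingredient in your sketch; with it, the rest of your plan goes through essentially as written.
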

	In addition, we establish many other double series for $\pi$ also from the summation formula of hypergeometric series.
	\begin{theorem}\label{thm:3}
		The following results are true.
		\begin{flalign}
			&\sum_{k=1}^{\infty}(4k+1)\frac{(-\frac12)_k(\frac12)_k^3}{(k+1)!k!^3}\sum_{i=1}^{2k}(-1)^{i-1}\frac{1}{i^2}=\frac23-\frac{8}{\pi^2},\\
			&\sum_{k=1}^{\infty}(4k+3)\frac{(-\frac12)_k(\frac12)_k^2(\frac32)_k}{k!(k+1)!^2(k+2)!}\sum_{i=1}^{k}\left(\frac{1}{(2i-1)^2}-\frac{1}{4(i+1)^2}\right)=\frac{32}{27}-\frac{992}{81\pi^2},\\
			&\sum_{k=1}^{\infty}(4k+3)\frac{(\frac32)_k(\frac12)_k^3}{k!(k+1)!^3}\sum_{i=1}^{k}\left(\frac{1}{(2i-1)^2}-\frac{1}{4(i+1)^2}\right)=\frac{8}{3}-\frac{24}{\pi^2},\\
			&\sum_{k=1}^{\infty}(-1)^k(4k+3)\frac{(\frac32)_k(\frac12)_k^2}{k!(k+1)!^2}\sum_{i=1}^{k}\left(\frac{1}{(1+i)^2}-\frac{4}{(2i-1)^2}\right)=\frac{4\pi}{3}-\frac{8}{\pi}.
		\end{flalign}
	\end{theorem}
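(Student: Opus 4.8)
The plan is to derive all four identities by a single mechanism: differentiate twice, with respect to an auxiliary parameter $\epsilon$, a very-well-poised summation formula in which $\epsilon$ enters only through a factor that is even in $\epsilon$, and then let $\epsilon\to0$. For the first three identities the source formula is the nonterminating very-well-poised ${}_5F_4$-summation
\begin{align*}
&\sum_{k=0}^{\infty}\frac{a+2k}{a}\,\frac{(a)_k(b)_k(c)_k(d)_k}{k!\,(1+a-b)_k(1+a-c)_k(1+a-d)_k}\\
&\qquad=\frac{\Gamma(1+a-b)\Gamma(1+a-c)\Gamma(1+a-d)\Gamma(1+a-b-c-d)}{\Gamma(1+a)\Gamma(1+a-b-c)\Gamma(1+a-b-d)\Gamma(1+a-c-d)},
\end{align*}
valid for $\operatorname{Re}(1+a-b-c-d)>0$, and for the fourth it is the classical very-well-poised ${}_4F_3(-1)$-evaluation
\begin{equation*}
\sum_{k=0}^{\infty}(-1)^k\frac{a+2k}{a}\,\frac{(a)_k(b)_k(c)_k}{k!\,(1+a-b)_k(1+a-c)_k}=\frac{\Gamma(1+a-b)\Gamma(1+a-c)}{\Gamma(1+a)\Gamma(1+a-b-c)}.
\end{equation*}
In the ${}_5F_4$ I would keep $b$ fixed and set $c=\tfrac12+\epsilon$, $d=\tfrac12-\epsilon$, and in the ${}_4F_3$ set $b=\tfrac12+\epsilon$, $c=\tfrac12-\epsilon$; for the four identities in the order stated the remaining specializations are $(a,b)=(\tfrac12,-\tfrac12)$, $(\tfrac32,-\tfrac12)$, $(\tfrac32,\tfrac12)$ and $a=\tfrac32$. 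At $\epsilon=0$ the well-poised factor $\frac{a+2k}{a}$ becomes $4k+1$ (if $a=\tfrac12$) or a constant multiple of $4k+3$ (if $a=\tfrac32$), and the Pochhammer product reduces exactly to the one displayed in the corresponding line of the theorem.

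The key point is that in every case the whole $\epsilon$-dependence is carried by the factor
\begin{equation*}
\Phi_k(\epsilon)=\frac{(\tfrac12+\epsilon)_k(\tfrac12-\epsilon)_k}{(c'-\epsilon)_k(c'+\epsilon)_k}=\frac{(\tfrac12)_k^2}{(c')_k^2}\prod_{i=0}^{k-1}\frac{1-\epsilon^2/(i+\tfrac12)^2}{1-\epsilon^2/(i+c')^2},
\end{equation*}
with $c'=1$ for the first identity and $c'=2$ for the other three. Since $\Phi_k$ is even in $\epsilon$ we have $\mathcal{D}_\epsilon\Phi_k\big|_{\epsilon=0}=0$, so there are no cross terms and the $\epsilon^2$-coefficient of the product above gives
\begin{equation*}
\mathcal{D}_\epsilon^2\Phi_k(\epsilon)\big|_{\epsilon=0}=-2\,\frac{(\tfrac12)_k^2}{(c')_k^2}\sum_{i=0}^{k-1}\Bigl(\frac1{(i+\tfrac12)^2}-\frac1{(i+c')^2}\Bigr).
\end{equation*}
Re-indexing the inner sum turns this into $-8\,\frac{(1/2)_k^2}{k!^2}\sum_{i=1}^{2k}\frac{(-1)^{i-1}}{i^2}$ when $c'=1$ and into a fixed nonzero constant times $\frac{(1/2)_k^2}{(k+1)!^2}\sum_{i=1}^{k}\bigl(\frac1{(2i-1)^2}-\frac1{4(i+1)^2}\bigr)$ when $c'=2$, which is precisely the shape of the inner sums in the theorem. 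I would then apply $\mathcal{D}_\epsilon^2\big|_{\epsilon=0}$ term by term to the summation formula: writing the $k$-th summand as $A_k\Phi_k(\epsilon)$ with $\sum_k|A_k|<\infty$ and $\Phi_k''(\epsilon)$ bounded uniformly in $k$ for $\epsilon$ in a neighbourhood of $0$, the differentiated series converges uniformly, so this is legitimate; the $k=0$ term drops out since $\Phi_0\equiv1$, which is why the sum starts at $k=1$. The left-hand side thus becomes a fixed nonzero constant times the double series in the statement, and the right-hand side becomes $\mathcal{D}_\epsilon^2$ of the Gamma quotient at $\epsilon=0$.

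Finally I would evaluate that. After the specializations each Gamma quotient collapses, via Euler's reflection formula $\Gamma(s)\Gamma(1-s)=\pi/\sin(\pi s)$ and $\Gamma(\tfrac12)=\sqrt\pi$, to an elementary function of $\epsilon$ — for instance $\tfrac{2}{\pi}\cdot\dfrac{\epsilon\cos\pi\epsilon}{(\frac14-\epsilon^2)\sin\pi\epsilon}$ in the first case and $\tfrac{8}{3}(1-\epsilon^2)\cdot\dfrac{\epsilon}{\sin\pi\epsilon}$ in the last — so that, using $\epsilon\cot(\pi\epsilon)=\tfrac1\pi-\tfrac{\pi\epsilon^2}{3}+O(\epsilon^4)$ and expanding the remaining rational and trigonometric factors to order $\epsilon^2$, one reads off the $\epsilon^2$-coefficient; dividing by the constant produced by $\mathcal{D}_\epsilon^2\Phi_k$ then yields $\tfrac23-\tfrac{8}{\pi^2}$, $\tfrac{32}{27}-\tfrac{992}{81\pi^2}$, $\tfrac83-\tfrac{24}{\pi^2}$ and $\tfrac{4\pi}{3}-\tfrac8\pi$ respectively. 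I expect the only real difficulty to be locating, for each prescribed left-hand side, the summation formula and parameter values that reproduce it — once these are fixed the argument is mechanical — together with keeping the several $\sin/\cos$ quotients straight in the last step; the uniform-convergence justification for differentiating under the summation sign is routine, since $A_k$ decays like a negative power of $k$.
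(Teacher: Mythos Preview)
Your proposal is correct and rests on the same core mechanism as the paper: differentiate a very-well-poised summation twice with respect to a parameter that enters symmetrically, so that the first derivative vanishes at the specialization point and only the second-order harmonic terms survive. The details differ in packaging, however. The paper proves Theorem~\ref{thm:3} as the $q\to1$ limit of Theorem~\ref{thm:4}, which in turn is obtained from a single lemma: starting from Jackson's terminating ${}_8\phi_7$ (equivalently, for the $q=1$ case, Dougall's terminating ${}_7F_6$), one substitutes $d\to q/b$, keeps an extra free parameter $c$, differentiates twice with respect to $b$, specializes $(a,b,c)$, and only then lets $n\to\infty$; the four identities arise from four choices of $(a,b,c)$, the last from $c\to\infty$. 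You instead work directly with the \emph{nonterminating} Dougall ${}_5F_4$ for the first three and the very-well-poised ${}_4F_3(-1)$ for the fourth, inserting $c,d=\tfrac12\pm\epsilon$ so that evenness in $\epsilon$ makes the first derivative vanish by inspection. Your route is a bit more economical---no terminating series, no $n\to\infty$ limit, and the vanishing of the cross terms is immediate from parity rather than from checking $A_k=B_n=0$ by hand---while the paper's route has the advantage of handling all four identities (and their $q$-analogues) uniformly from one master formula with the extra free parameter~$c$. Both arguments ultimately reduce the right-hand side via the reflection formula and a Taylor expansion of trigonometric quotients, exactly as you outline.
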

	The $q$-analogues of the double series for $\pi$ in Theorem \ref{thm:3} are expressed as follows.
	\begin{theorem}\label{thm:4}
		Let the complex number $q$ with $0<|q|<1$. Then, we have
		\begin{align}
			&\sum_{k=1}^{\infty}[4k+1] \frac{(q;q^2)^3_k(q^{-1};q^2)_k}{(q^2;q^2)^3_k(q^4;q^2)_k}q^{2k}
			\sum_{i=1}^{k}\left(\frac{q^{2i}}{[2i]^2}-\frac{q^{2i-1}}{[2i-1]^2}\right)&\nonumber\\
			&\qquad\qquad\qquad=\frac{(q^3;q^2)^3_{\infty}(q;q^2)_{\infty}}{(q^2;q^2)^3_{\infty}(q^4;q^2)_{\infty}}\sum_{i=1}^{\infty}\left(\frac{(-1)^{i+1}}{[i+1]^2}-\frac{q^{3i}(1-q)}{2[2i+1]^2}\right)q^{i+1},\\
			&\sum_{k=1}^{\infty}[4k+3] \frac{(q;q^2)^2_k(q^3,q^{-1};q^2)_k}{(q^4;q^2)^2_k(q^2,q^6;q^2)_k}q^{4k}
			\sum_{i=1}^{k}\left(\frac{q^{2i+2}}{[2i+2]^2}-\frac{q^{2i-1}}{[2i-1]^2}\right)&\nonumber\\		&\qquad\qquad\qquad=\frac{(q^3;q^2)_{\infty}(q^5;q^2)^2_{\infty}(q^3;q^2)_{\infty}}{(1-q)(q^4;q^2)^3_{\infty}(q^6;q^2)_{\infty}}\sum_{i=1}^{\infty}\left(\frac{(-1)^{i+1}}{[i+3]^2}-\frac{q^{3i+2}(1-q)}{2[2i+3]^2}\right)q^{i+3},\\
			&\sum_{k=1}^{\infty}[4k+3] \frac{(q;q^2)^3_k(q^3;q^2)_k}{(q^4;q^2)^3_k(q^2;q^2)_k}q^{2k}
			\sum_{i=1}^{k}\left(\frac{q^{2i+2}}{[2i+2]^2}-\frac{q^{2i-1}}{[2i-1]^2}\right)&\nonumber\\
			&\qquad\qquad\qquad=\frac{(q^3;q^2)^3_{\infty}(q^3;q^2)_{\infty}}{(1-q)(q^4;q^2)^3_{\infty}(q^2;q^2)_{\infty}}\sum_{i=1}^{\infty}\left(\frac{(-1)^{i}}{[i+2]^2}+\frac{q^{3i}(1-q)}{2[2i+1]^2}\right)q^{i+2},\\
			&\sum_{k=1}^{\infty}(-1)^k[4k+3]\frac{(q;q^2)_{k+1}(q;q^2)^2_k}{(q^2,q^4,q^4;q^2)_k}q^{-k(k+4)}\sum_{i=1}^{k}\left(\frac{q^{2i+2}}{[2i+2]^2}-\frac{q^{2i-1}}{[2i-1]^2}\right)&\nonumber\\
			&\qquad\qquad\qquad=\frac{(q^3,q^3;q^2)_{\infty}}{(q^4,q^4;q^2)_{\infty}}\sum_{i=1}^{\infty}\frac{q^{2i+2}}{[2i+2]^2}.
		\end{align}			
	\end{theorem}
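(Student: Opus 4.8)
The plan is to obtain each of the four identities as the image, under the partial derivative operator $\mathcal{D}$, of a suitable terminating basic hypergeometric summation formula, in exact parallel with the route taken for Theorems \ref{thm:1} and \ref{thm:2}. Concretely, I would start for each case from a one‑parameter family of terminating identities
\begin{equation*}
	\sum_{k=0}^{n} F_k(a;q) = G_n(a;q),
\end{equation*}
where the left side is a terminating very‑well‑poised ${}_6\phi_5$/${}_8\phi_7$‑type sum, or one of Gasper and Rahman's quadratic summation formulas (e.g.\ \cite[Eq.~(3.8.12)]{G-R}, already used for Theorem \ref{thm:2}), chosen so that specializing the parameter $a$ to a distinguished value $a_0$ (a power of $q$) collapses the left side to a known single‑sum $q$‑analogue of a Ramanujan‑type series — the $q$‑companions of the classical identities such as $\sum(4k+1)(\tfrac12)_k^3/k!^3(-1)^k=2/\pi$ that underlie Theorem \ref{thm:3}. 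The summand $[4k+1]\,(q;q^2)_k^3(q^{-1};q^2)_k/\big((q^2;q^2)_k^3(q^4;q^2)_k\big)q^{2k}$, and the analogous summands in the other three cases, are precisely what results after one differentiation: differentiating a factor $(q^{a};q^2)_k$ and then setting $a=a_0$ pushes one parameter off its symmetric position, which is the source of the ``extra'' factors $(q^{-1};q^2)_k$ and $(q^4;q^2)_k$.

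The inner sums are produced by logarithmic differentiation. Since $\mathcal{D}_a (q^{a};q^2)_k = (q^{a};q^2)_k\,(-\log q)\sum_{i=0}^{k-1} q^{a+2i}/(1-q^{a+2i})$, a \emph{second} application of $\mathcal{D}$ — to a second factor of the same type, or a repeated differentiation, combined with the normalization $[n]=(1-q^n)/(1-q)$ — creates terms of the shape $q^{2i-1}/[2i-1]^2$ and $q^{2i}/[2i]^2$; taking the difference of the contributions coming from a numerator and a denominator $q$‑shifted factorial yields exactly the combinations $q^{2i}/[2i]^2-q^{2i-1}/[2i-1]^2$, respectively the shifted versions $q^{2i+2}/[2i+2]^2-q^{2i-1}/[2i-1]^2$, that appear on the left. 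On the right, applying the same two differentiations to the closed‑form product $G_n(a;q)$ and then specializing leaves an infinite product such as $(q^3;q^2)_\infty^3(q;q^2)_\infty/\big((q^2;q^2)_\infty^3(q^4;q^2)_\infty\big)$ (and its analogues) times a \emph{single} sum of the type $\sum_i\big((-1)^{i+1}/[i+1]^2-q^{3i}(1-q)/(2[2i+1]^2)\big)q^{i+1}$, whose shape is again dictated by logarithmic differentiation of the Gamma‑like quotient of $q$‑products; in the fourth case the corresponding right side collapses all the way to a plain sum $\sum_i q^{2i+2}/[2i+2]^2$.

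Once the differentiated terminating identity is in hand, I would let $n\to\infty$; because $0<|q|<1$, the terminating factors (powers of $q^{-2n}$, together with the Gaussian weights $q^{(k+1)^2}$ and $q^{-k(k+4)}$ where present) decay fast enough to make the passage to the non‑terminating double series legitimate and to justify interchanging the order of the double summation. A preliminary sanity check is to let $q\to1^{-}$: the $q$‑products tend to the Gamma quotients, the single sums to constants of $\pi^2$‑type (using $\sum(-1)^{i-1}/i^2=\pi^2/12$ and its relatives), and the whole equation degenerates to the corresponding line of Theorem \ref{thm:3} — which both pins down the correct $a_0$ and catches sign errors.

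The main obstacle I anticipate is bookkeeping rather than analysis: for each of the four cases one must select the base summation and the pair of differentiated parameters so that, after specialization, \emph{every} unwanted term vanishes — in particular the $k=0$ summand (the stated sums begin at $k=1$) and all but the single surviving $i$‑family on each side — and so that the surviving pieces assemble \emph{exactly} into the printed summands, including the half‑integer shifts, the factors $(1-q)$, and the overall powers of $q$. Managing this simultaneously for four different base formulas, and verifying the delicate cancellations inside the right‑hand products, is where the real effort lies; the limit $n\to\infty$ and the $q\to1$ degeneration are then routine.
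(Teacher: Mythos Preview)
Your strategy is essentially the paper's: differentiate a terminating very-well-poised summation twice, specialize so the first-derivative contributions vanish, then let $n\to\infty$. Two points sharpen your plan and eliminate what you flag as the main obstacle.

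First, you do \emph{not} need four different base formulas. The paper derives all four identities from a single lemma obtained from Jackson's terminating ${}_8\phi_7$ (the $q$-Dougall--Dixon sum) after the substitution $d\to q/b$, $e\to a^2q^{n}/c$. This yields
\[
\sum_{k=0}^{n}\frac{1-aq^{2k}}{1-a}\,\frac{(a,b,c,q/b,a^2q^n/c,q^{-n};q)_k}{(q,aq/b,aq/c,ab,cq^{1-n}/a,aq^{n+1};q)_k}\,q^k
=\frac{(aq,aq/bc,a,ab/c;q)_n}{(aq/b,aq/c,ab,a/c;q)_n},
\]
and the four identities (2.7)--(2.10) are just the specializations $(a,b,c)=(q^{1/2},q^{1/2},q^{-1/2})$, $(q^{3/2},q^{1/2},q^{-1/2})$, $(q^{3/2},q^{1/2},q^{1/2})$, and $(a,b)=(q^{3/2},q^{1/2})$ with $c\to\infty$, followed by $q\to q^2$ and $n\to\infty$. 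The quadratic summation \cite[Eq.~(3.8.12)]{G-R} is not used here.

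Second, the mechanism making the first derivative vanish is not ad hoc parameter tuning but the built-in symmetry $b\leftrightarrow q/b$ of the substituted ${}_8\phi_7$: the numerator carries both $(b;q)_k$ and $(q/b;q)_k$, the denominator both $(aq/b;q)_k$ and $(ab;q)_k$. Differentiating in $b$ and then setting $b=q^{1/2}$ forces the first-derivative pieces $A_k$ and $B_n$ to cancel pairwise, so only the second-derivative pieces $C_k$, $D_n$ survive. That is exactly what produces your inner sums $q^{2i}/[2i]^2-q^{2i-1}/[2i-1]^2$ (or their shifts) on the left and the single $i$-sum on the right, with no extra bookkeeping. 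Once you see this, the ``delicate cancellations'' you anticipate are automatic.
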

	
	The structure of this paper is arranged as follows. We shall prove Theorem \ref{thm:1} in Section 3 and Theorem \ref{thm:2} in Section 4 by the partial derivative operator respectively. Finally, we will certify Theorems \ref{thm:3} and \ref{thm:4} in Section 5.
	
	\section{Proof of Theorem \ref{thm:1}}
	Following Gasper and Rahman \cite{G-R}, the hypergeometric series is defined as
	\begin{equation*}
		\begin{split}
			_{r+1}F_r\left[
			\begin{array}{cccc}
				a_1,&a_2,&\dots,&a_{r+1}\\
				b_1,&b_2,&\dots,&b_r
			\end{array};z
			\right]=\sum_{k=0}^{\infty}\frac{(a_1)_k(a_2)_k \dots (a_{r+1})_k}{(b_1)_k(b_2)_k\dots(b_r)_k}\frac{z^k}{k!}
		\end{split}.
	\end{equation*}
	In order to prove Theorem \ref{thm:1}, we recall the following $_7F_6$-summation formula\cite[Equation~(1.7)]{G-D}:
	\begin{equation}\label{eq:3-1}
		\begin{split}
			_7F_6
			&\left[
			\begin{array}{ccccccc}
				a,&1+\frac a3,&b,&1-b,&c,&\frac12+a-c+n,&-n\\[3pt]
				&\frac a3,&\frac{2+a-b}{2},&\frac{1+a+b}{2},&1+a+2n,&1+a-2c,&2c-a-2n
			\end{array}
			\right]\\[3pt]
			&=\left[
			\begin{array}{cccc}
				\frac{1+a}{2},&1+\frac a2,&\frac{1+a+b}{2}-c,&1+\frac{a-b}{2}-c\\[5pt]
				\frac{1+a+b}{2},&1+\frac{a-b}{2},&\frac{1+a}{2}-c,&1+\frac a2-c
			\end{array}
			\right]_n.
		\end{split}
	\end{equation}	
	
	\begin{proof}[Proof of Theorem \ref{thm:1}]
		Applying the partial derivative $\mathcal{D}_b$ on both sides of the $_7F_6$-summation formula \eqref{eq:3-1}, we subsequently obtain the following result
		\begin{equation}
			\begin{split}
				&\sum_{k=1}^{n}\frac{(a)_k(1+\frac a3)_k(b)_k(1-b)_k(c)_k(\frac 12+a-c+n)_k(-n)_k}{(1)_k(\frac a3)_k(\frac{2+a-b}{2})_k(\frac{1+a+b}{2})_k(1+a+2n)_k(1+a-2c)_k(2c-a-2n)_k} A_k(a,b)\\[2mm]
				&=\frac{(\frac{1+a}{2})_n(1+\frac a2)_n(\frac{1+a+b}{2}-c)_n(1+\frac{a-b}{2}-c)_n}{(\frac{1+a+b}{2})_n(1+\frac{a-b}{2})_n(\frac{1+a}{2}-c)_n(1+\frac a2-c)_n} B_n(a,b,c).\label{eq:3-2}
			\end{split}
		\end{equation}
		where
		\begin{align*}
			A_k(a,b)&=H_k(b-1)-H_k(-b)+\tfrac12H_k(\tfrac{a-b}{2})-\tfrac12H_k(\tfrac{a+b-1}{2}),\\[2mm]
			B_n(a,b,c)&=\tfrac12H_n(\tfrac{a+b-1}{2}-c)-\tfrac12H_n(\tfrac{a-b}{2}-c)-\tfrac12H_n(\tfrac{a+b-1}{2})+\tfrac12H_n(\tfrac{a-b}{2}).
		\end{align*}
		Here $H_k(x)$ represents the generalized harmonic number, which is defined by
		\begin{equation*}
			H_k(x)=\sum_{i=1}^{k}\frac{1}{x+i}.
		\end{equation*}
		More generally, $H_k^{(m)}(x)$ represents the generalized harmonic number of order $m$ as
		\begin{equation*}
			H_k^{(m)}(x)=\sum_{i=1}^{k}\frac{1}{(x+i)^m}.
		\end{equation*}
		Taking $m=1$ and $x=0$ in $H_k^{(m)}(x)$, we get the famous classical harmonic number.

		Now employing the partial derivative operator $\mathcal{D}_b$ again on both sides of \eqref{eq:3-2}, we get
		\begin{equation}
			\begin{split}
				&\sum_{k=1}^{n}\frac{(a)_k(1+\frac a3)_k(b)_k(1-b)_k(c)_k(\frac 12+a-c+n)_k(-n)_k}{(1)_k(\frac a3)_k(\frac{2+a-b}{2})_k(\frac{1+a+b}{2})_k(1+a+2n)_k(1+a-2c)_k(2c-a-2n)_k}\\[2mm]
				&\quad\times\{A_k(a,b)^2+C_k(a,b)\}\\[2mm]
				&=\frac{(\frac{1+a}{2})_n(1+\frac a2)_n(\frac{1+a+b}{2}-c)_n(1+\frac{a-b}{2}-c)_n}{(\frac{1+a+b}{2})_n(1+\frac{a-b}{2})_n(\frac{1+a}{2}-c)_n(1+\frac a2-c)_n}\{B_n(a,b,c)^2+D_n(a,b,c)\}.\label{eq:3-3}
			\end{split}
		\end{equation}
		where
		\begin{align*}
			C_k(a,b)=&-H^{(2)}_k(b-1)-H^{(2)}_k(-b)+\tfrac14H^{(2)}_k(\tfrac{a-b}{2})+\tfrac14H^{(2)}_k(\tfrac{a+b-1}{2}),\\[2mm]
			D_n(a,b,c)=&-\tfrac14H^{(2)}_n(\tfrac{a+b-1}{2}-c)-\tfrac14H^{(2)}_n(\tfrac{a-b}{2}-c)+\tfrac14H^{(2)}_n(\tfrac{a+b-1}{2})+\tfrac14H^{(2)}_n(\tfrac{a-b}{2}).
		\end{align*}
		
		Note that $A_k(a,b)=B_n(a,b,c)=0$ when $a=-\frac12,b=\frac12,c\rightarrow -\frac12$ in \eqref{eq:3-3}. Then, we subsequently arrive at Theorem \ref{thm:1} by performing the replacements $a=-\frac12,b=\frac12,c\rightarrow -\frac12$ and $n\rightarrow\infty$ in \eqref{eq:3-3}. Here we have used the following property of Gamma function
		\begin{equation*}
			\lim_{n\rightarrow\infty}\frac{\Gamma(x+n)}{\Gamma(y+n)}n^{y-x}=1,
		\end{equation*}
		and Euler's formula	
		\begin{equation*}
			\sum_{i=1}^{\infty}\frac{1}{i^2}=\frac{\pi^2}{6}.
		\end{equation*}
	\end{proof}

	\section{Proof of Theorem \ref{thm:2}}
	Following Gasper and Rahman \cite{G-R}, the basic hypergeometric series $_{r+1}\phi_r$ is defined as
	\begin{equation*}
		\begin{split}
			_{r+1}\phi_r\left[
			\begin{array}{cccc}
				a_1,&a_2,&\dots,&a_{r+1}\\
				b_1,&b_2,&\dots,&b_r
			\end{array};q,z
			\right]=\sum_{k=0}^{\infty}\frac{(a_1,a_2,\cdots,a_{r+1};q)_k}{(q,b_1,b_2,\dots,b_r;q)_k}z^k.
		\end{split}
	\end{equation*}
	For the purpose of proving Theorem \ref{thm:2}, we first present Gasper and Rahman's quadratic summation formula \cite[Equation~(3.8.12)]{G-R} as follows:
	\begin{equation}
		\begin{split}
			\sum_{k=0}^{\infty}&\frac{1-aq^{3k}}{1-a}\frac{(a,b,q/b;q)_k(d,f,a^2q/df;q^2)_k}{(q^2,aq^2/b,abq;q^2)_k(aq/d,aq/f,df/a;q)_k}q^k\\
			&\qquad+\frac{(aq,f/a,b,q/b;q)_{\infty}(d,a^2q/df,fq^2/d,df^2q/a^2;q^2)_{\infty}}{(a/f,fq/a,aq/d,df/a;q)_{\infty}(aq^2/b,abq,fq/ab,bf/a;q^2)_{\infty}}\\
			&\qquad\qquad\times\thinspace_3\phi_2
			\left[\begin{array}{cccc}
				f,bf/a,fq/ab\\
				fq^2/d,df^2q/a^2
			\end{array}	;q^2,q^2
			\right]\\[2mm]
			=&\frac{(aq,f/a;q)_{\infty}(aq^2/bd,abq/d,bdf/a,dfq/ab;q^2)_{\infty}}{(aq/d,df/a;q)_{\infty}(aq^2/b,abq,bf/a,fq/ab;q^2)_{\infty}}.\label{eq:4-1}
		\end{split}
	\end{equation}
	\begin{proof}[Proof of Theorem \ref{thm:2}]
		Performing the replacements $d\rightarrow q^{-2n}$ and $f\rightarrow c^2$ in \eqref{eq:4-1}, we get its truncated form as
		\begin{equation*}
			\begin{split}
				&\sum_{k=0}^{n}\frac{1-aq^{3k}}{1-a}\frac{(a,b,q/b;q)_k(q^{-2n},c^2,a^2q^{2n+1}/c^2;q^2)_k}{(q^2,aq^2/b,abq;q^2)_k(aq^{2n+1},aq/c^2,c^2q^{-2n}/a;q)_k}q^k\\[2mm]
				&=\frac{(aq,aq^2,aq^2/bc^2,abq/c^2;q^2)_n}{(aq/c^2,aq^2/c^2,aq^2/b,abq;q^2)_n}.
			\end{split}
		\end{equation*}
		Applying the partial derivative $\mathcal{D}_b$ on both sides of the above identity twice, then we subsequently obtain the following summation
		\begin{equation}
			\begin{split}
				&\sum_{k=1}^{n}\frac{1-aq^{3k}}{1-a}\frac{(a,b,q/b;q)_k(q^{-2n},c^2,a^2q^{2n+1}/c^2;q^2)_k}{(q^2,aq^2/b,abq;q^2)_k(aq^{2n+1},aq/c^2,c^2q^{-2n}/a;q)_k}q^k\{A^2_k(a,b)+C_k(a,b)\}\\[2mm]
				&\quad=\frac{(aq,aq^2,aq^2/bc^2,abq/c^2;q^2)_n}{(aq/c^2,aq^2/c^2,aq^2/b,abq;q^2)_n}\{B^2_n(a,b,c)+D_n(a,b,c)\}.\label{eq:4-2}
			\end{split}
		\end{equation}
		where
		\begin{align*}
			A_k(a,b)&=\sum_{i=1}^{k}\frac{-q^{i-1}}{1-bq^{i-1}}+\sum_{i=1}^{k}\frac{q^i/b^2}{1-q^i/b}-\sum_{i=1}^{k}\frac{aq^{2i}/b^2}{1-aq^{2i}/b}-\sum_{i=1}^{k}\frac{-aq^{2i-1}}{1-abq^{2i-1}},\\
			B_n(a,b,c)&=\sum_{i=1}^{n}\frac{aq^{2i}/b^2c^2}{1-aq^{2i}/bc^2}+\sum_{i=1}^{n}\frac{-aq^{2i-1}/c^2}{1-abq^{2i-1}/c^2}-\sum_{i=1}^{n}\frac{aq^{2i}/b^2}{1-aq^{2i}/b}-\sum_{i=1}^{n}\frac{-aq^{2i-1}}{1-abq^{2i-1}},
		\end{align*}
		\begin{align*}
			C_k(a,b)=&\sum_{i=1}^{k}\frac{-q^{2i-2}}{(1-bq^{i-1})^2}+\sum_{i=1}^{k}\frac{q^i/b^3(q^i/b-2)}{(1-q^i/b)^2}-\sum_{i=1}^{k}\frac{aq^{2i}/b^3(aq^{2i}/b-2)}{(1-aq^{2i}/b)^2}\\
			&\qquad+\sum_{i=1}^{k}\frac{a^2q^{4i-2}}{(1-abq^{2i-1})^2},\\
			D_n(a,b,c)=&\sum_{i=1}^{n}\frac{aq^{2i}/b^3c^2(aq^{2i}/bc^2-2)}{(1-aq^{2i}/bc^2)^2}+\sum_{i=1}^{n}\frac{-a^2q^{4i-2}/c^4}{(1-abq^{2i-1}/c^2)^2}-\sum_{i=1}^{n}\frac{aq^{2i}/b^3(aq^{2i}/b-2)}{(1-aq^{2i}/b)^2}\\
			&\qquad+\sum_{i=1}^{n}\frac{a^2q^{4i-2}}{(1-abq^{2i-1})^2}.
		\end{align*}
		Clearly, $A_k(a,b)=B_n(a,b,c)=0$ when $a= q^{-\frac12} ,b= q^{\frac12} ,c= q^{-\frac12}$. We have the following identity by substituting $a= q^{-\frac12}, b= q^{\frac12}$ and $c= q^{-\frac12}$ in \eqref{eq:4-2}
		\begin{equation*}
			\begin{split}
				\sum_{k=1}^{n} &\frac{1-q^{2k+\frac32}}{1-q^{\frac32}} \frac{(q^{\frac32},q^{\frac12},q^{\frac12},q^{\frac12},q^{n+\frac52},q^{-n};q)_k}{(q,q^2,q^2,q^2,q^{-n},q^{n+\frac52};q)_k}q^k
				\sum_{i=1}^{k}\left(\frac{q^{i}}{(1-q^{i+1})^2}-\frac{q^{i-\frac32}}{(1-q^{i-\frac12})^2}\right)\\[2mm]
				=&\frac{(q^{\frac52},q^{\frac32},q^{\frac32},q^{\frac32};q)_n}{(q^2,q^2,q^2,q;q)_n}\left(\sum_{i=1}^{2n}(-1)^{i}\frac{q^{\frac i2}}{(1-q^{\frac i2+1})^2}+\frac12\sum_{i=1}^{n}\frac{q^{2i}-q^{2i+\frac12}}{(1-q^{i+\frac12})^2}\right).
			\end{split}
		\end{equation*}
		Finally, we get Theorem \ref{thm:2} by replacing $q$ by $q^2$ and letting $n\rightarrow\infty$ in the above identity.
	\end{proof}
	
	\section{Proofs of Theorems \ref{thm:3} and \ref{thm:4}}
	
	Since the double series for $\pi$ in Theorem \ref{thm:3} are just the $q\rightarrow1$ cases of Theorem \ref{thm:4}, the the proof of Theorem \ref{thm:4} is enough. For proving Theorem \ref{thm:4}, we first present the following Lemma which plays a key role in our proof.
	\begin{lemma}\label{lem-1}
		Let the complex number $q$ with $0<|q|<1$. Then, we have
		\begin{equation}
			\begin{split}
				\sum_{k=1}^{n} &\frac{1-aq^{2k}}{1-a} \frac{(a,b,c,q/b,a^2q^n/c,q^{-n};q)_k}{(q,aq/b,aq/c,ab,cq^{1-n}/a,aq^{n+1};q)_k}q^k\{A^2_k(a,b)+C_k(a,b)\}\\[2mm]
				=&\frac{(aq,aq/bc,a,ab/c;q)_n}{(aq/b,aq/c,ab,a/c;q)_n}\{B^2_n(a,b,c)+D_n(a,b,c)\}.\label{eq:5-1}
			\end{split}
		\end{equation}
		where
		\begin{align*}
			A_k(a,b)&=\sum_{i=1}^{k}\frac{-q^{i-1}}{1-bq^{i-1}}+\sum_{i=1}^{k}\frac{q^i/b^2}{1-q^i/b}-\sum_{i=1}^{k}\frac{aq^i/b^2}{1-aq^i/b}+\sum_{i=1}^{k}\frac{aq^{i-1}}{1-abq^{i-1}},\\
			B_n(a,b,c)&=\sum_{i=1}^{n}\frac{aq^i/b^2c}{1-aq^i/bc}+\sum_{i=1}^{n}\frac{-aq^{i-1}/c}{1-abq^{i-1}/c}-\sum_{i=1}^{n}\frac{aq^i/b^2}{1-aq^i/b}+\sum_{i=1}^{n}\frac{aq^{i-1}}{1-abq^{i-1}},
		\end{align*}
		\begin{align*}
			C_k(a,b)=&-\sum_{i=1}^{k}\frac{q^{2i-2}}{(1-bq^{i-1})^2}+\sum_{i=1}^{k}\frac{(q^i/b-2)q^i/b^3}{(1-q^i/b)^2}-\sum_{i=1}^{k}\frac{(aq^i/b-2)aq^i/b^3}{(1-aq^i/b)^2}\\
			&\qquad+\sum_{i=1}^{k}\frac{a^2q^{2i-2}}{(1-abq^{i-1})^2},\\
			D_n(a,b,c)=&\sum_{i=1}^{n}\frac{(aq^i/bc-2)aq^i/b^3c}{(1-aq^i/bc)^2}-\sum_{i=1}^{n}\frac{a^2q^{2i-2}/c}{(1-abq^{i-1}/c)^2}-\sum_{i=1}^{n}\frac{(aq^i/b-2)aq^i/b^3}{(1-aq^i/b)^2}\\
			&\qquad+\sum_{i=1}^{n}\frac{a^2q^{2i-2}}{(1-abq^{i-1})^2}.
		\end{align*}
	\end{lemma}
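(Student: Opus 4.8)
The plan is to obtain \eqref{eq:5-1} by applying the operator $\mathcal{D}_b$ twice to a classical terminating summation, in the same spirit as the proofs of Theorems~\ref{thm:1} and~\ref{thm:2}. The closed form I would start from is Jackson's terminating, very-well-poised, balanced ${}_8\phi_7$ summation \cite[Equation~(2.6.2)]{G-R},
\begin{equation*}
	{}_8W_7\bigl(a;b,c,d,e,q^{-n};q,q\bigr)=\frac{(aq,aq/bc,aq/bd,aq/cd;q)_n}{(aq/b,aq/c,aq/d,aq/bcd;q)_n},\qquad a^2q^{n+1}=bcde.
\end{equation*}
First I would specialise $d\mapsto q/b$ and $e\mapsto a^2q^n/c$. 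Then the balance condition $bcde=a^2q^{n+1}$ holds identically; the very-well-poised factor equals $(1-aq^{2k})/(1-a)$, the parameters $a,b,c,q/b,a^2q^n/c,q^{-n}$ produce exactly the summand $\frac{1-aq^{2k}}{1-a}\frac{(a,b,c,q/b,a^2q^n/c,q^{-n};q)_k}{(q,aq/b,aq/c,ab,cq^{1-n}/a,aq^{n+1};q)_k}q^k$, and the right-hand side collapses to $\frac{(aq,aq/bc,a,ab/c;q)_n}{(aq/b,aq/c,ab,a/c;q)_n}$, which is precisely the prefactor appearing in \eqref{eq:5-1}. Write this identity as $\sum_{k=0}^{n}T_k=R_n$.

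Next I would differentiate $\sum_{k=0}^{n}T_k=R_n$ twice with respect to $b$; since $T_k$ and $R_n$ are finite products of $q$-shifted factorials, each rational in $b$, the identity survives differentiation. Logarithmic differentiation, $\mathcal{D}_b(x(b);q)_m=(x(b);q)_m\sum_{i=0}^{m-1}\frac{-x'(b)q^i}{1-x(b)q^i}$, gives $\mathcal{D}_bT_k=T_k\,A_k(a,b)$ and $\mathcal{D}_bR_n=R_n\,B_n(a,b,c)$, where $A_k$ is the sum of the logarithmic $b$-derivatives of the four $b$-dependent factors of the summand (namely $(b;q)_k$ and $(q/b;q)_k$ from the numerator and $(aq/b;q)_k$, $(ab;q)_k$ from the denominator, the latter two contributing with the opposite sign), and $B_n$ is the analogous sum for $R_n$; after re-indexing $\sum_{i=0}^{m-1}$ as $\sum_{i=1}^{m}$ this reproduces the $A_k(a,b)$ and $B_n(a,b,c)$ displayed in the lemma. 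Differentiating once more and using the product rule gives $\mathcal{D}_b^2T_k=T_k\bigl(A_k^2+\mathcal{D}_bA_k\bigr)$ and $\mathcal{D}_b^2R_n=R_n\bigl(B_n^2+\mathcal{D}_bB_n\bigr)$, and differentiating each rational summand of $A_k$ and $B_n$ termwise by the quotient rule yields exactly $C_k(a,b)=\mathcal{D}_bA_k(a,b)$ and $D_n(a,b,c)=\mathcal{D}_bB_n(a,b,c)$. Finally, $A_0=C_0=0$ (empty sums), so the $k=0$ term of $\sum_kT_k(A_k^2+C_k)$ vanishes and the summation may begin at $k=1$, which is \eqref{eq:5-1}.

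I expect the only real difficulty to be the bookkeeping in the middle step: keeping track of the signs produced by $x'(b)$ for each of the factors $b,\,q/b,\,aq/b,\,ab$ (the $q/b$ and $aq/b$ factors bring an extra $-1/b^2$ from the chain rule, and the $aq/b,\,ab$ factors sit in a denominator and so enter $A_k$ with reversed sign), together with the index shifts needed to make every inner sum run from $i=1$ to $k$ (respectively $n$). This is entirely routine but easy to get wrong; once $A_k$ and $B_n$ have been checked against the stated formulas, obtaining $C_k$ and $D_n$ by a second termwise differentiation is mechanical. Aside from this, one needs only that differentiation preserves the identity, which is immediate since both sides are rational in $b$ and the terminating sum is finite.
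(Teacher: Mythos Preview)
Your proposal is correct and follows exactly the paper's own proof: the paper also starts from Jackson's ${}_8\phi_7$ summation, specialises $d\to q/b$, $e\to a^2q^n/c$, and then applies $\mathcal{D}_b$ twice to obtain \eqref{eq:5-1}. Your write-up in fact supplies more of the logarithmic-differentiation bookkeeping than the paper does.
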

	
	Obviously, we can derive Wei's result \eqref{eq:1-3} by repalcing $q\rightarrow q^2$ and letting $n \rightarrow \infty$ after setting $(a,b)=(q^{\frac12},q^{\frac12})$ and $c\rightarrow\infty$ in Lemma \ref{lem-1}.
	\begin{proof}
		Recall Jackson's $q$-Dougall-Dixon formula\cite{jackson} which can be stated as follows
		\begin{equation}
			\begin{split}
				&_8\phi_7
				\left[
				\begin{array}{cccccccc}
					a,qa^{\frac12},-qa^{\frac12},b,c,d,e,q^{-n}\\
					a^{\frac12},-a^{\frac12},aq/b,aq/c,aq/d,aq/e,aq^{n+1}
				\end{array};q,q
				\right]\\[2mm]
				&=\frac{(aq,aq/bc,aq/bd,aq/cd;q)_n}{(aq/b,aq/c,aq/d,aq/bcd;q)_n},\quad \text{where $q^{n+1}a^2=bcde$}.\label{eq:5-2}
			\end{split}
		\end{equation}
		Now, putting $d\rightarrow q/b$ and $e\rightarrow a^2q^{n}/c$ in \eqref{eq:5-2}, we arrive at
		\begin{equation*}
			\begin{split}
				\sum_{k=0}^{n} \frac{1-aq^{2k}}{1-a} \frac{(a,b,c,q/b,a^2q^n/c,q^{-n};q)_k}{(q,aq/b,aq/c,ab,cq^{1-n}/a,aq^{n+1};q)_k}q^k=\frac{(aq,aq/bc,a,ab/c;q)_n}{(aq/b,aq/c,ab,a/c;q)_n}.
			\end{split}
		\end{equation*}
		Then we subsequently conclude that the identity \eqref{lem-1} holds by applying the partial derivative $\mathcal{D}_b$ on both sides of the above identity twice.
	\end{proof}

	\begin{proof}[Proof of Theorem \ref{thm:4}]
		We can subsequently get (2.7) by taking $(a,b,c)=(q^{\frac12},q^{\frac12},q^{-\frac12})$, and then replacing $q\rightarrow q^2$ and letting $n \rightarrow \infty$ in Lemma \ref{lem-1}. Similarly, letting $(a,b,c)=(q^{\frac32},q^{\frac12},q^{-\frac12})$, $(a,b,c)=(q^{\frac32},q^{\frac12},q^{\frac12})$, $(a,b)=(q^{\frac32},q^{\frac12})$ and $c\rightarrow\infty$, we are led to the double summation identities (2.8), (2.9) and (2.10) respectively.
	\end{proof}
	In fact, we can also prove the truth of Theorem \ref{thm:3} by the following Dougall's summation formula \cite[Corollary~(3)]{Chu} in the same way as we have done in the proof of Theorem \ref{thm:1}.
	\begin{equation*}
		\begin{split}
			_7F_6
			&\left[
			\begin{array}{ccccccc}
				a,&1+\frac a2,&b,&c,&d,&e,&-n\\[3pt]
				&\frac a2,&1+a-b,&1+a-c,&1+a-d,&1+a-e,&1+a+n
			\end{array}
			\right]\\[3pt]
			=&\left[
			\begin{array}{cccc}
				1+a,&1+a-b-c,&1+a-b-d,&1+a-c-d\\[3pt]
				1+a-b,&1+a-c,&1+a-d,&1+a-b-c-d
			\end{array}
			\right]_n.
		\end{split}
	\end{equation*}

\end{document}